\def\hhmm{\number\hh:\ifnum\mm<10{}0\fi\number\mm}
\DeclareMathOperator{\sep}{sep}
\DeclareMathOperator{\insep}{insep}
\DeclareMathOperator{\Swan}{Swan}
\DeclareMathOperator{\Aut}{Aut}
\DeclareMathOperator{\Stab}{Stab}
\DeclareMathOperator{\Gal}{Gal}
\DeclareMathOperator{\norm}{norm}
\DeclareMathOperator{\GL}{GL}
\DeclareMathOperator{\red}{red}
\newcommand{\Z}{\mathbb{Z}}
\newcommand{\N}{\mathbb{N}}
\newcommand{\F}{\mathbb{F}}
\renewcommand{\P}{\mathbb{P}}
\renewcommand{\subset}{\subseteq}
\renewcommand{\phi}{\varphi}
\declaretheoremstyle[spaceabove=15pt,spacebelow=15pt, qed=$\square$]{defstyle}
\declaretheoremstyle[
	spaceabove=15pt,
	spacebelow=15pt, 
	bodyfont=\itshape,
	qed=$\square$]{thmstyle}
\declaretheorem[style=thmstyle]{Theorem}
\declaretheorem[sibling=Theorem, style=thmstyle]{Proposition}
\declaretheorem[sibling=Theorem,  style=thmstyle]{Lemma}
\declaretheorem[sibling=Theorem, style=thmstyle]{Corollary}
\declaretheorem[sibling=Theorem, style=defstyle]{Definition}
\declaretheorem[sibling=Theorem, style=defstyle]{Remark}
\declaretheorem[style=thmstyle]{Claim}
\begin{document}
\title{A note on fierce ramification}

\author{H\'el\`ene Esnault}
\address{
Freie Universit\"at Berlin, Arnimallee 3, 14195, Berlin,  Germany}
\email{esnault@math.fu-berlin.de}
\author{Lars Kindler}
\email{ lars.kindler@posteo.de }
\author{Vasudevan Srinivas}
\address{School of Mathematics, Tata Institute of Fundamental Research, Homi
Bhabha Road, Colaba, Mumbai-400005, India} 
\email{srinivas@math.tifr.res.in}
\date{}
\maketitle
\section{Introduction}
Let $k$ be an algebraically closed field of characteristic $p>0$ and let $\ell$ 
be a prime different from $p$. One fixes an algebraic closure 
$\bar \F_\ell$ of $\F_\ell$. Let $\bar{C}$ be a nonsingular connected projective 
curve over $k$, with a dense Zariski 
open subset $C\hookrightarrow \bar{C}$,  and a geometric point $\bar c\in C$.  
To   any  continuous representation $\rho:\pi_1(C,\bar{c})\to 
\GL_r(\bar{\F}_\ell)$,  thus with values 
$\GL_r(\F_{\ell^n})$ for some non-zero natural number $n$, 
one associates  its Swan  conductor $Sw(\rho)$ in the group $Z_0(\bar C)$ is 
zero-cycles.  It is an effective divisor 
 supported on $\bar{C}\setminus C$, which  measures the wild ramification of 
$\rho$ (\cite[III.20]{Ser67}, \cite[1.1.2]{Lau81}, \cite[4.4]{KR14}.)

The definition of the Swan conductor is local: if $K$ is the function 
field of $C$, then for each closed point $x\in \bar{C}\setminus C$, consider the 
corresponding complete discrete valuation field obtained from $K$ by
completion at $x$, and the associated local Galois 
representation $\rho_x$ into $\GL_r(\bar{\F}_{\ell})$ (well-defined up to conjugation). 
The non-negative integer invariant assigned to it, see e.g. \cite[4.82,4.84]{KR14},  is the 
coefficient of  $x$ in the zero cycle $Sw(\rho)$. We will use the term ``Swan conductor'' 
to mean either the local or global invariant, depending on the context. 

When $X$ is a normal connected variety of finite type over $k$, a modulus 
condition for continuous representations $\rho:\pi_1(X,\bar{x})\to 
GL_r(\bar{\F}_\ell)$ is defined in 
 \cite[Defn.~3.6]{EK12}: if $ X\hookrightarrow \bar{X}$ is a normal 
compactification of $X$, and $\Delta$ is a Cartier divisor supported in $\bar 
X\setminus X$, 
we say that  
$\rho:\pi_1(X,\bar{x})\to GL_r(\bar{\F}_\ell)$ has {\em  ramification bounded by 
$\Delta$},  if for any morphism $\varphi:\bar {C}\to \bar {X}$ from a  connected 
nonsingular projective curve $\bar{C}$, such that $\varphi^{-1}(X)=C$ 
is nonempty,
 the induced  representation  $\varphi_*\circ\rho:\pi_1(C,\bar{c})\to   
GL_r(\bar{\F}_\ell)$ verifies 
 $$Sw(\varphi_*\circ\rho)\preceq \varphi^*\Delta $$
 with respect to the order on $Z_0(\bar C)$. 
  As $\bar X$ is normal,  the intersection of its smooth locus $ \bar X_{\rm 
reg}$  with $\bar X\setminus X$ is dense, so $\Delta\cap 
\bar X_{\rm reg}$  as a divisor is a sum $\sum_i  m_i \Delta_i$,  where 
$\Delta_i$ is an irreducible divisor and $m_i\in \N$  are the called the 
multiplicities of $\Delta$.   Let $N$ be a natural number. We say that  {\it 
$\rho$  has
ramification bounded by $N$  if it has ramification bounded by $\Delta$ for an 
effective divisor supported in $\bar X\setminus X$ with multiplicities $m_i\le 
N$ for all $i$. } 
Similarly we say that {\it $\rho$ has ramification bounded 
by $N$ along a divisorial discrete valuation $v$} if there exists some normal compactification 
$X\hookrightarrow \overline{X}$ as above, with a boundary component divisor $D_0$ 
corresponding to $v$, such that $\rho$ has ramification bounded by an effective 
Cartier divisor $\Delta$, where the coefficient of $D_0$ in $\Delta$ is $\leq N$.

A natural question is whether, for a fixed effective Cartier divisor $\Delta$ as 
above, or for a chosen divisorial valuation $v$, the class of representations 
$\rho:\pi_1(X,\bar{x})\to GL_r(\bar \F_\ell)$ with ramification bounded by $\Delta$ 
(or with ramification bounded by $N$ along $v$)  has  other ``finiteness properties'' with 
respect to wild ramification. 

One such is the notion of fierce ramification along an irreducible component $D_0$ of 
$ \bar{X} \setminus X  $.  Let $\pi: Y\to X$ be the Galois cover of Galois group 
$ {\rm Im} (\rho)$ determined by the quotient  $\pi_1(X, \bar x)\to {\rm Im} 
(\rho)$, and $ \bar \pi: \bar Y\to \bar X$ be the normalization of $\bar X$ in 
the field of functions of $Y$.  So the smooth locus $\bar Y_{\rm reg} \subset \bar Y$  has 
complement of codimension at least $2$. 
Let $E_0$ be an irreducible component of $ \bar  \pi^{-1}(D_0 ) \cap \bar Y_{\rm 
reg}$.  Then the {\em fierce ramification index} of  
 $D_0$ is the purely inseparable degree of the function field extension 
$k(D_0)\subset k(E_0)$. It  depends on the local system defined by $\rho$ and 
$D_0$, not on the choices of $\bar x$ and $E_0$. 
Indeed,  ${\rm Im}(\rho)$ acts  transitively on the set $\{ E'_0\}$ of 
components of $\bar \pi^{-1}(D_0) \cap \bar Y_{\rm reg}$, on the set $\{ k(D_0) 
\hookrightarrow k(E'_0)\} $ of  extensions of $k(D_0) $ preserving the separable 
closures and the purely inseparable ones. Changing $\bar x$ conjugates the 
representation. The conjugation sends $\{ k(D_0) \hookrightarrow k(E'_0)\} $ 
defined for $\bar x$ to the corresponding set defined for the other base point, 
preserving the separable closures and the purely inseparable ones.  We say that  
$(\rho, X\hookrightarrow \bar X)$ has {\it  fierce ramification bounded by a 
natural number $M$}  if for all $D_0$,  the fierce ramification index of $D_0$ is at most $M$.

Similarly we have the notion of {\it  fierce ramification index along a divisorial valuation} $v$, 
which equals the fierce ramification index along $D_0$ for any normal compactification 
$X\hookrightarrow \bar{X}$ with a boundary component $D_0$ associated to the divisorial
valuation $v$. This notion depends only on the discrete valuation, since it can be defined 
using the extension of discrete valuation rings associated to $E_0\to D_0$.

The aim of this note is to prove that bounding the ramification  along a divisorial valuation 
$v$ also bounds the fierce ramification index.

\begin{Theorem}\label{thm:main} Let  $(X,\ \ell,\  v,\ r)$ 
be as above. Let $N$ be a natural number.  Then there is a natural number $M$  
such that for all  continuous representations $\rho:\pi_1(X,\bar{x})\to \GL_r(\bar{\F}_\ell)$
 of ramification bounded by $N$ along $v$, the fierce ramification of $(\rho, 
X\hookrightarrow \bar X)$ along $v$ is bounded by $M$. 
\end{Theorem}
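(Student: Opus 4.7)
The plan is to localize at $v$ and then use a family of transversal curves to control the fierce index via the Swan conductor bound.

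First, I would localize. Choose a normal compactification $X \hookrightarrow \bar X$ whose boundary component $D_0$ corresponds to $v$; after shrinking $\bar X$ we may assume it is regular along $D_0$. By Cohen's structure theorem in equal characteristic $p$, the completion $\hat{\mathcal O}_{\bar X, D_0}$ is isomorphic to $k(D_0)\llbracket\pi\rrbracket$, with fraction field $K_v$ a complete discretely valued field of residue $F = k(D_0)$. The representation $\rho$ restricts to a local representation $\rho_v \colon G_{K_v} \to \GL_r(\bar\F_\ell)$ cutting out a finite Galois extension $L/K_v$ with residue field $F' = k(E_0)$; the fierce index $p^f = [F':F]_{\mathrm{insep}}$ is the target of the bound.

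Second, I would use a family of transversal curves. At a regular closed point $c \in D_0$ with local coordinates $x_1, \ldots, x_d$ and $D_0 = \{x_1 = 0\}$, the curves $\bar C_a = \{x_i = a_i x_1 : i \geq 2\}$ for $a \in \A^{d-1}_k$ meet $D_0$ transversally at $c$, and by hypothesis the local Swan conductor $\mathrm{Sw}(\rho|_{\bar C_a})_c \leq N$ for every $a$. Upper semi-continuity of the Swan conductor in families (\`a la Deligne--Laumon) ensures that the generic transversal Swan conductor, i.e.\ the Swan of the pulled-back representation on the generic curve in this family, is also bounded by $N$.

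Third, I would bound $p^f$ using the structure of the wild inertia. For a generic transversal curve $\bar C_a$ lifting to $\bar C'_a \subset \bar Y$ through a point $\tilde c \in E_0 \cap \bar Y_{\mathrm{reg}}$ above $c$, the curve ramification at $\tilde c / c$ equals $e_v$, whereas the total inertia $|I_v|$ at $v$ equals $e_v \cdot p^f$; the fierce index thus represents a ``missing'' component of wild ramification not directly detected by any single curve. Via Artin--Schreier--Witt analysis of the local representation $\rho_v$, irreducible subrepresentations of the wild inertia whose Witt-vector coefficients have residue not in $F^p$ contribute, on the generic transversal curve, a Swan conductor bounded below by an increasing function of $p^f$. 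In the simplest rank-one case, an Artin--Schreier character producing fierce index $p$ at $v$ gives a generic curve Swan of $p-1$; iteration to fierce $p^f$ gives analogous lower bounds. Combined with the bound $\leq N$ and rank $\leq r$, one obtains $p^f \leq M(N, r)$.

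The \emph{main obstacle} I expect is the last step, and specifically the derivation of a uniform lower bound for the generic transversal Swan in terms of $p^f$ and $r$ across all possible wild inertia structures (not only cyclic or elementary abelian). The cleanest framework is likely Kato's refined Swan conductor in $\Omega^1_F$, whose ``non-logarithmic'' component precisely detects the fierce contribution, together with its compatibility under restriction to transversal curves.
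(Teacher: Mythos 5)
Your step 3 contains both a factual error and, packed into one unproved claim, the entire content of the theorem. The error: for a transversal curve $\bar C$ through a general closed point $c\in D_0$, the ramification index of the induced cover of curves at a point $\tilde c$ over $c$ is \emph{not} $e_v$ but $e_v f^{\insep}$ --- the inseparable residue degree does not go ``missing'' on the curve, it is converted into extra inertia, because the fibre of $E_0\to D_0$ over a general $c$ has only $f^{\sep}$ points while the total degree of the cover is unchanged. Check this on $y^p-y=sx^{-p}$ over $\bar X=\A^2=\Spec k[x,s]$ with $D_0=\{x=0\}$: normalizing via $z=xy$ one finds $k(E_0)=k(z)\supset k(D_0)=k(z^p)$, so $e=1$ and $f^{\insep}=p$, yet the restriction to the transversal curve $s=a$ is $y^p-y=ax^{-p}\sim a^{1/p}x^{-1}$, a totally ramified Artin--Schreier extension with ramification index $p$ and Swan conductor $1$. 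This example simultaneously refutes your claimed lower bound ``fierce index $p$ gives generic curve Swan $p-1$''. The conversion of $f^{\insep}$ into curve inertia is precisely the mechanism of the paper's proof: Bertini together with Zhang's theorem (the pullback of a general complete intersection curve to $\bar Y$ is irreducible and geometrically unibranch) yields that the decomposition group $\bar I^C_{y/x}$ of the restricted cover has order exactly $ef^{\insep}$ (\Cref{thm:main2}); hence $f^{\insep}$, a $p$-power, divides the order of the inertia group of a local representation over a field $k(\!(t)\!)$ with \emph{algebraically closed} residue field and Swan $\le N$, and an elementary group-theoretic argument (Jordan's theorem, the break decomposition and Hasse--Arf; Claim 2 in the proof of \Cref{prop:boundedness}) bounds the $p$-Sylow of such an inertia group by a constant depending only on $r,N,\ell$.

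By contrast, your route localizes at the generic point of $D_0$, where the residue field is imperfect, and then defers everything to a uniform lower bound for the transversal Swan in terms of $p^f$ via Kato's refined Swan conductor and Artin--Schreier--Witt theory. You correctly identify this as the main obstacle, but it is not a technical gap one can route around inside your framework: the bound cannot take the simple form you posit (the example above shows fierce index $p$ is compatible with transversal Swan $1$), a correct version must involve $r$ and $N$ jointly rather than the Swan alone, and establishing it uniformly over all wild inertia structures of rank $\le r$ is essentially equivalent to the theorem itself. Two smaller points: the Deligne--Laumon semicontinuity in your step 2 is unnecessary, since the hypothesis already bounds the Swan on every curve meeting $D_0$ transversally; and your step 3 tacitly assumes the preimage of $\bar C_a$ in $\bar Y$ passes through a single branch at $\tilde c$, which is exactly what the paper's appeal to Zhang's Bertini-type theorem is needed to guarantee.
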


The theorem answer positively a question posed by Pierre Deligne in 
\cite{Del16}.

The proof consists of two parts. In Section~\ref{sec:loc}, we first make a 
local analysis of ramification at a point on a nonsingular curve, in relation to 
a bound on the Swan conductor at 
that point. This is formulated in terms of a boundedness assertion for 
representations of the corresponding local Galois group (see 
Proposition~\ref{prop:boundedness}). 
To globalize the argument and perform the proof of Theorem~\ref{thm:main}, we 
notably use a version of the Bertini theorem  controlling that the inverse image 
of a curve by a ramified Galois cover remains unibranch \cite{Zha95}.  \\[.2cm]
{\it Acknowledgements}:  The first author thanks Pierre Deligne for his letter 
\cite{Del16} sent in relation to \cite{Esn17}. 
This note, which answers one of the two questions in his letter, has been 
circulating among experts for two years. The second question, asking whether 
fixing  $k$ algebraically closed, and $(X\hookrightarrow \bar X, \ell, r, 
\Delta)$ one can find a curve $C\hookrightarrow X$ such that the restriction of 
any irreducible representation of rank $\le r$ and ramification bounded by 
$\Delta$ remains irreducible, remains unanswered.  By Deligne's theorem 
\cite[Thm.~1.1]{EK12} and the standard Lefschetz theorem over finite fields (see 
\cite{Esnt17} and references in there), this is true over $k=\bar \F_p$ for the 
representations which descend to $\F_{p^m}$ for a  fix natural number $m$.

	\section{Local arguments} \label{sec:loc}
Let $k$ be an algebraically closed field of characteristic $p>0$ and let $K$ be 
a discretely valued, complete field  of characteristic $p$ with residue field 
$k$.  Fix algebraic an algebraic closure $\bar{K}$ and write 
$G_K:=\Gal(\bar{K}/K)$.

\begin{Proposition}\label{prop:boundedness}
Fix a prime $\ell\neq p$, and positive natural numbers $r,N$. There exists a
number $M(\ell,r,N)$ with the following property. For any  continuous
representation $\rho:G_K\rightarrow \GL_r(\F_{\ell^n})\subset 
\GL_r(\bar{\F}_\ell)$
with $Sw(\rho)$ bounded by $N$, there exists a finite Galois extension $L_\rho$ 
of $K$
of degree $\leq M(\ell, r, N)$, such that $\rho|_{L_\rho}:=\rho|_{G_{L_\rho}}$ is
tamely ramified.
\end{Proposition}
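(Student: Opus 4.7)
The plan is to reduce the proposition to a bound on the order of the image $P_G := \rho(P) \subset \GL_r(\bar{\F}_\ell)$ of the wild inertia $P \triangleleft G_K$, and then to obtain such a bound using the break decomposition of the underlying representation together with the Hasse-Arf theorem for cyclic $p$-extensions.

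For the first step, I use that the tame quotient $G_K/P \cong \hat{\Z}^{(p')}$ is procyclic and of order prime to $p$, hence projective among profinite groups; consequently the extension $1 \to P \to G_K \to \hat{\Z}^{(p')} \to 1$ splits. This identifies the image $G := \rho(G_K)$ with a semidirect product $P_G \rtimes C$ in which $C$ is a finite cyclic subgroup of order prime to $p$. The conjugation action of $C$ on $P_G$ factors through $\Aut(P_G)$; let $K_1 \subset C$ be its kernel. Then $K_1$ lies in the centre of $G$ (hence is normal in $G$) and $K_1 \cap P_G = 1$. Taking $L_\rho := \bar{K}^{\rho^{-1}(K_1)}$ produces a finite Galois extension of $K$ with $[L_\rho : K] = [G : K_1] \leq |P_G| \cdot |\Aut(P_G)|$, and a direct computation gives $\rho(P \cap G_{L_\rho}) = P_G \cap K_1 = 1$, so $\rho|_{G_{L_\rho}}$ is tamely ramified. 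Since $|\Aut(P_G)|$ is bounded by an explicit function of $|P_G|$, this reduces the proposition to proving a bound on $|P_G|$ depending only on $r$ and $N$.

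For the bound on $|P_G|$, I would decompose $V := \bar{\F}_\ell^r$ as a $G_K$-representation into its break components $V = \bigoplus_\lambda V(\lambda)$, under which the Swan bound reads $\sum_{\lambda > 0} \lambda \dim V(\lambda) \leq N$. As $P$ preserves each break piece, $P_G$ embeds into the product of its images in each $\GL(V(\lambda))$, and it suffices to bound these separately. For fixed $\lambda > 0$, each irreducible $P$-subrepresentation $W$ of $V(\lambda)$ has dimension $p^d$ for some $d$, and its image $Q_W \subset \GL(W)$ is a finite $p$-group acting faithfully and irreducibly; hence $Z(Q_W)$ is cyclic and $|Q_W / Z(Q_W)| \leq p^{2d}$. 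The centre $Z(Q_W)$ acts by a character $\omega$ on $W$, and using the structure of wild irreducible $G_K$-representations as induced from characters of wildly ramified subfields, $\omega$ is identified with a character of an open subgroup of $G_K$ whose Swan conductor is bounded by an explicit function of $\lambda$, $d$ and $N$.

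The last step is to bound the order of $\omega$: for a faithful character of order $p^m$ of an abelian $p$-extension of a local field with algebraically closed residue field, Hasse-Arf combined with strict monotonicity of the integer upper ramification breaks yields $\mathrm{Sw}(\omega) \geq m$. Consequently $|Z(Q_W)| = p^m$ is bounded in terms of $\lambda$, $d$ and $N$, and combining with $|Q_W / Z(Q_W)| \leq p^{2d}$ and summing over the finitely many irreducible $P$-constituents of $V$ yields the desired bound on $|P_G|$. The main obstacle is the careful use of the structure theorem for wild irreducible local Galois representations and the tracking of Swan conductors under induction from wildly ramified subfields; once these are in place, the remainder is mechanical bookkeeping.
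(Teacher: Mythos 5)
Your first step is correct and, modulo notation, is exactly the paper's concluding argument: the paper also splits off the prime-to-$p$ quotient, takes $H=\ker\bigl(\Z/M\to\Aut(\bar{P})\bigr)$ (your $K_1$), and bounds $[\bar{I}:H]$ by $|\bar{P}|\cdot|\Aut(\bar{P})|$, so the whole proposition does reduce to bounding the image of wild inertia. The gap is in your second step. The inequality $|Q_W/Z(Q_W)|\leq p^{2d}$ for a finite $p$-group $Q_W$ with a faithful irreducible representation $W$ of dimension $p^d$ is false: the standard character-theoretic inequality $\chi(1)^2\leq[Q_W:Z(\chi)]\leq[Q_W:Z(Q_W)]$ goes in exactly the opposite direction, with equality only for groups of central type. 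Concretely, for $p>2$ the wreath product $Q=\Z/p\wr\Z/p$ of order $p^{p+1}$ has a faithful irreducible representation of degree $p$ (induce a character of the base $(\Z/p)^p$ that is nontrivial on exactly one coordinate) and center the diagonal copy of $\Z/p$, so $|Q/Z(Q)|=p^{p}>p^{2}$. Since bounding $\dim W$ and $|Z(Q_W)|$ does not bound $|Q_W|$, your argument does not bound $|P_G|$ --- and this is precisely the hard point of the proposition, because $p$-subgroups of $\GL_r(\bar{\F}_\ell)$ with $r$ fixed can have arbitrarily large order, so the Swan bound must be made to control more than the central character of each constituent. A secondary soft spot: the appeal to wild irreducible representations being ``induced from characters of wildly ramified subfields'' is not a theorem in the generality you need when $p$ divides the dimension, and the identification of the central character of a subquotient of $P_G$ with a character of an open subgroup of $G_K$ of controlled Swan conductor is left unjustified.

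For contrast, the paper circumvents exactly this difficulty by a different reduction: it applies Jordan's theorem to the prime-to-$\ell$ part $\tilde{\bar{I}}$ of the image to produce a normal \emph{abelian} subgroup $A$ of index at most $J(r)$, controls $Sw(\rho_A)$ after pullback to the corresponding subfield via Lemma~\ref{lemma:pullback}, and then uses Hasse--Arf on the abelian extension $L/L'$ to bound the number of integer jumps of the ramification filtration and hence the \emph{exponent} of $A\cap\bar{P}$; because $A\cap\bar{P}$ is an abelian subgroup of $\GL_r(\bar{\F}_\ell)$ of order prime to $\ell$, it is simultaneously diagonalizable, so the exponent bound together with the rank $r$ bounds its order. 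It is the passage to an abelian subgroup of bounded index, rather than to irreducible constituents, that makes an exponent bound suffice; if you want to salvage your approach you would need to replace the false center/quotient estimate by some such device.
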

\begin{proof}
	Given $\rho$ as in the statement, we
	write $\bar{I}:=\rho(G_K)$,  and we let $\bar{P}$ denote the (unique) 
$p$-Sylow subgroup of $\bar{I}$. The Galois theory of 
	discretely valued fields shows that there is a short exact sequence
	\begin{equation}\label{extension}1\rightarrow \bar{P}\rightarrow 
\bar{I}\rightarrow\Z/M\rightarrow 1,\end{equation}
	where $(p,M)=1$. Writing $M=\ell^nM'$ with $(\ell,M')=1$, we have 
$\Z/M\cong \Z/\ell^n\times \Z/M'$.
	\begin{Claim}
	The exponent $n$ is bounded by a constant only depending on $r$.
\end{Claim}	
\begin{proof}In $\bar{I}$ there exists an element $\sigma$ of order precisely 
$\ell^n$ (e.g.~by the theorem of Schur-Zassenhaus). Considering $\sigma$ 
as an element of $\GL_r(\bar{\F}_\ell)$, we can write $\sigma=1+x$, with $x$ a 
nilpotent matrix such that $x^{\ell^{n-1}}\neq 0$ but $x^{\ell^n}=0$. This 
shows that $\ell^{n-1}\leq r$.
\end{proof}

\begin{Claim} The order of $\bar{P}$ is bounded by a constant only depending on 
$r,N$.\end{Claim}
\begin{proof}
Let $\tilde{\bar{I}}\subset \bar{I}$ be the preimage of $\Z/M'\subset \Z/M$. 
We proceed in several steps.
\begin{enumerate}
	\item As $(M',p)=1$, $\bar{P}\subset \tilde{\bar{I}}$ and $\bar{P}$ is 
the unique $p$-Sylow subgroup of $\tilde{\bar{I}}$. 
	\item As $(M',\ell)=1$, we see that $(\ell,|\tilde{\bar{I}}|)=1$.
	Recall Jordan's  theorem according to which  there is a constant $J(r)$,  the {\it Jordan constant}, depending only on 
$r$, such that, given a subgroup $\GL_r(\bar{\F}_{\ell})$ of order prime to $\ell$, it contains an abelian normal subgroup of index at most $J(r)$ (the constant is the same as for subgroups of $\GL_r({\mathbb C})$). It yields 
  	 a normal abelian subgroup 
	$A\lhd \tilde{\bar{I}}$, such that $B:=\tilde{\bar{I}}/A$ is of order  bounded by $J(r)$. 
	 Consider the following diagram:
\begin{equation*}
	\begin{tikzcd}
		1\rar&\dar[hookrightarrow] A\cap 
\bar{P}\rar&\dar[hookrightarrow]\bar{P}\rar&\dar[hookrightarrow]\bar{P}/A\cap 
\bar{P}\rar &1\\
		1\rar&A\rar&\tilde{\bar{I}}\rar&B\rar &1
	\end{tikzcd}
\end{equation*}
To prove the claim, it suffices to show that the order of $A\cap \bar{P}$ is 
bounded by a constant only depending on $N, r$. 
\item Translating the group theoretic picture via Galois theory, we obtain the 
following diagram of field extensions, where the arrows are labeled 
by the corresponding Galois groups.
\begin{equation*}
	\begin{tikzcd}
		L\\
		L'\uar{A}\\
		\tilde{L}\uar{B}\ar[bend right=45,swap]{uu}{\tilde{\bar{I}}}\\
		K\uar[swap]{\Z/\ell^n}\ar[bend left=45]{uuu}{\bar{I}}
	\end{tikzcd}
\end{equation*}
\item If $\rho_A:\rho^{-1}(A)\rightarrow \GL_r(\bar{\F}_{\ell})$ is the induced 
representation, then there exists a bound $N'$ depending only on $N,r$, 
such that $Sw(\rho_A)$ is bounded by $N'$. 


To see this, apply \Cref{lemma:pullback} below to $L'/K$ and $\rho_A$.
This yields the desired estimate with $N'\leq \ell^r J(r) N$.

%

\item For every element $\sigma\in A\cap \bar{P}$, we have $\sigma^{p^{N'}}=1$. 
Indeed, if $x_1,\ldots, x_t\in \Z_{>0}$ are the jumps of the ramification 
filtration of $A=\Gal(L/L')$ in ascending order, and if $x_0=0$, then
	
\[\Swan(\rho_A)=\sum_{i=1}^tx_i\underbrace{\dim(V^{A^{(x_{i})}}/V^{A^{(x_{i-1})}
})}_{\geq 1}\leq N',\]
	where $V$ is the $r$-dimensional $\bar{\F}_{\ell}$-vector space on which 
$A$ acts. It follows that 
	$t\leq N'$, as the jumps $x_i$ are integers, according to the theorem of 
Hasse-Arf. But the associated gradeds 
	of the filtration $A^{(x)}$ are products of copies of $\Z/p$. It follows 
that every element of $\bar{P}\cap A$ is killed by $p^{N'}$.
\item Finally, as $p\neq \ell$, the elements of the  finite abelian subgroup 
$A\cap \bar{P}\subset \GL_r(\bar{\F}_\ell)$ are simultaneously 
diagonizable matrices with eigenvalues $p^{N'}$-th roots of unity. It follows 
that the cardinality of $A\cap \bar{P}$ is bounded by  $rp^{N'}$, which 
is a constant depending only on $N$ and $r$, as claimed.
\end{enumerate}
\end{proof}

\begin{Remark}\label{explicit-bound}  Our argument yields the upper bound $rp^{N'}J(r)$ for the order of $\overline{P}$. 
Since $N'\leq \ell^n J(r)N\leq \ell r J(r)N$ (since $\ell^{n-1}\leq r$), 
we get an upper  bound for the order of $\overline{P}$ to be $r p^{\ell r J(r)} J(r) N$. 
\end{Remark}

We finish the proof of Proposition~\ref{prop:boundedness}.
 The short exact sequence \eqref{extension} induces a homomorphism of groups 
$\alpha:\Z/M\rightarrow \Aut(\bar{P})$, given by conjugation. 
 Write $H:=\ker(\alpha)$ and let $\bar{I}^\alpha\subset \bar{I}$  be the 
preimage of $H$. By construction $\bar{I}^\alpha\cong \bar{P}\times H$, 
 with both $\bar{P}$ and $H$ characteristic subgroups of $\bar{I}^\alpha$. We 
obtain an injective map $H\hookrightarrow \bar{I}$, such that the 
 composition $H\hookrightarrow \bar{I}\twoheadrightarrow\Z/M$ is the identity on 
$H$, and $H$ is normal in $\bar{I}$. This shows that there is a short exact 
sequence
\[1\rightarrow \bar{P}\rightarrow \bar{I}/H\rightarrow (\Z/M)/H\rightarrow 1.\]
The orders of the two outer terms are bounded by  constants depending only on 
$r, N$ (because of Claim 2 and because $(\Z/M)/H\subset \Aut(\bar{P})$). It 
follows 
that the order of $\bar{I}/H$ is bounded by such a constant. 

Let $L_\rho$ be the Galois extension of $K$ corresponding to $\rho^{-1}(H)$. 
Then the restriction of $\rho$ to $L_{\rho}$ is tame, as it takes values in the 
prime-to-$p$-group $H$. 
On the other hand, by construction we have $[L_{\rho}:K]=[\bar{I}:H]$.
 This finishes the proof.

\end{proof}

\begin{Remark} 
Our argument gives an upper  bound $M$ for the index of the subgroup $H$ to be the order of 
the automorphism  group of $\bar{P}$. Thus, if $M_0=r p^{\ell r J(r)} J(r) N$ is 
our bound for the order of $\bar{P}$, a crude upper bound  for $M$ is the order of the 
permutation group, that is, $M_0!$
\end{Remark}

\begin{Lemma}\label{lemma:pullback}
	Let $K\subset K'$ be an extension of finite Galois extensions of 
complete discretely valued fields with algebraically closed residue fields of 
characteristic $p>0$. 
	If $\rho:\Gal(\bar{K}/K)\rightarrow \GL_r(\F_{\ell^n})$ is a continuous 
representation with $Sw(\rho)$ bounded by $ N$, then $Sw(\rho_{K'})$ is bounded 
by  $[K':K]N$.
\end{Lemma}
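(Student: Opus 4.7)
The plan is to compute both Swan conductors using the upper-numbering ramification filtration and to compare them via the Herbrand transition function for $K'/K$. Choose a finite Galois extension $L/K$ through which $\rho$ factors, large enough that $K'\subset L$; write $G=\Gal(L/K)$, $H=\Gal(L/K')\lhd G$, and $\Gamma=G/H=\Gal(K'/K)$. If $V$ denotes the representation space, then
\[
Sw(\rho)=\int_0^\infty \dim_{\F_{\ell^n}}(V/V^{G^u})\,du,\qquad Sw(\rho|_{G_{K'}})=\int_0^\infty \dim_{\F_{\ell^n}}(V/V^{H^v})\,dv,
\]
where $G^\bullet$ and $H^\bullet$ denote the upper-numbering ramification filtrations of $G$ and of $H$ (the latter regarded as $\Gal(L/K')$ with its own numbering).

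First I would establish the Herbrand comparison $H^v=H\cap G^{\varphi_{K'/K}(v)}$. This follows from the compatibility of the lower filtration with subgroups, $H_w=H\cap G_w$, combined with the transitivity formula $\psi_{L/K}=\psi_{L/K'}\circ\psi_{K'/K}$ (cf.~\cite[Ch.~IV]{Ser67}). The resulting inclusion $H^v\subset G^{\varphi_{K'/K}(v)}$ gives $V^{G^{\varphi_{K'/K}(v)}}\subset V^{H^v}$, whence $\dim(V/V^{H^v})\leq \dim(V/V^{G^{\varphi_{K'/K}(v)}})$.

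Second I would change variables $u=\varphi_{K'/K}(v)$, for which $dv=\psi_{K'/K}'(u)\,du$, yielding
\[
Sw(\rho|_{G_{K'}})\leq \int_0^\infty \dim(V/V^{G^u})\,\psi_{K'/K}'(u)\,du.
\]
The proof concludes with the uniform bound $\psi_{K'/K}'(u)\leq [K':K]$: since the residue field is algebraically closed, $K'/K$ is totally ramified, so $\Gamma_0=\Gamma$ and $\psi_{K'/K}'(v)=|\Gamma_0|/|\Gamma_{\psi_{K'/K}(v)}|\leq [K':K]$. Inserting this back delivers $Sw(\rho|_{G_{K'}})\leq [K':K]\cdot Sw(\rho)\leq [K':K]\cdot N$.

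There is no substantive obstacle — the argument is formal ramification theory in the style of Serre's \emph{Local Fields}. The only point requiring a moment of care is the Herbrand comparison step, which is a direct consequence of the transitivity of $\psi$ together with the compatibility of lower numbering with subgroups, and which works equally well for the $\F_{\ell^n}$-coefficient representations considered here because the filtration depends only on $L/K$ and the invariants are functorial in the coefficient field.
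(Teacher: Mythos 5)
Your argument is correct, and it reaches the bound by a genuinely different decomposition than the paper's. The paper first reduces to the case of an irreducible $\rho$ (using additivity of the Swan conductor in the fixed rank $r$), invokes the break decomposition to get a single jump $u$ of the lower-numbering filtration, writes $Sw(\rho)=\phi_{L/K}(u)\cdot r$ and $Sw(\rho|_{K'})\leq \phi_{L/K'}(u)\cdot r$ explicitly, and then concludes from the transitivity $\phi_{L/K}=\phi_{K'/K}\circ\phi_{L/K'}$ together with the elementary estimate $\phi_{K'/K}(v)\geq v/[K':K]$. You instead keep $\rho$ arbitrary, express both Swan conductors as integrals of $\dim(V/V^{(-)})$ against the upper-numbering filtrations, compare the two filtrations via Herbrand ($H^v=H\cap G^{\phi_{K'/K}(v)}$, of which only the inclusion $\subset$ is needed), and absorb the factor $[K':K]$ through the change of variables and the bound $\psi_{K'/K}'\leq[K':K]$. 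The two proofs rest on the same two pillars --- compatibility of the filtrations under passage to $\Gal(L/K')$ and the bound $\lvert\Gamma_0\rvert/\lvert\Gamma_w\rvert\leq[K':K]$ on the slope of the Herbrand function, which is exactly the paper's inequality $\phi_{K'/K}(v)\geq v/[K':K]$ read on derivatives --- but your route avoids both the reduction to irreducible constituents and the single-break analysis, at the cost of invoking the integral formula for the Swan conductor; it is slightly more robust (the pointwise inequality $\dim(V/V^{H^v})\leq\dim(V/V^{G^{\phi_{K'/K}(v)}})$ holds before any integration), while the paper's version is more elementary and self-contained. One cosmetic remark: the total ramification of $K'/K$ is not actually needed for your last step, since $[\Gamma_0:\Gamma_{\psi_{K'/K}(v)}]\leq\lvert\Gamma\rvert=[K':K]$ in any case.
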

\begin{proof}
	Write $G:=\rho(\Gal(\bar{K}/K))$ and $H:=\rho(\Gal(\bar{K}/K'))$. Then 
there is a finite extension $L/K$, such that $G=\Gal(L/K)$ and $H=\Gal(L/K')$. 
	Since the rank $r$ is fixed, and since the Swan conductor is additive, 
we may assume that $\rho$ is irreducible. In this case, the existence of the 
``break decomposition'' 
	implies that there exists a unique $u\in \Z_{>0}$ such that $G_u\neq 
G_{u+1}$. As $H_u=G_u\cap H$, $u$ is also the only index where the ramification 
filtration of $H$ jumps
	(we assume $H\neq 1$).
	One has  $$Sw(\rho)=\phi_{L/K}(u)\cdot r,$$  and
	\[Sw(\rho|_{K'})=\phi_{L/K'}(u)\cdot\dim(V^{H_u}/V^{H_{u-1}})\leq 
\phi_{L/K'}(u)\cdot r,\]
	where $\phi$ denotes the Herbrand function. We know that 
	$$\phi_{L/K}=\phi_{K'/K}\circ \phi_{L/K'},$$ and for any $v\in \Z_{>0}$, 
one has 
	
\[\phi_{K'/K}(v)=\frac{1}{[K':K]}(|\Gal(K'/K)_1|+\ldots+|\Gal(K'/K)_v|)\geq 
\frac{v}{[K':K]} .\]
	It follows that
	
\[\frac{1}{[K':K]}\Swan(\rho|_{K'})\leq\frac{1}{[K':K]}\phi_{L/K'}(u)\cdot r\leq 
\Swan(\rho)\leq N,\]
	as we wanted to prove.
\end{proof}
\section{Global arguments}
Let $k$ be an algebraically closed field, $\bar{X}\subset \P^n_k$ a normal
projective $k$-variety and let $X\subset \bar{X}$ be an open subscheme
such that $(\bar{X}\setminus X)$ is  divisor. Let $D_0$ 
be an irreducible component of $\bar{X}_{\rm reg}\setminus X$, with generic 
point $\xi$. This corresponds to a divisorial valuation $v$ with discrete 
valuation ring ${\mathcal O}_{\bar{X},\eta}$.  We fix $\rho:\pi_1(X,\bar{x})\rightarrow \GL_r(\bar{\F}_{\ell})$, a continuous 
representation with finite image $\bar{I}$ and 
	with ramification bounded by an effective Cartier divisor $\Delta$ which 
has multiplicty at most $N$ along $D_0$. 

	We then introduce some notations.
\begin{itemize}
	\item Let $\pi:Y\rightarrow X$ be finite Galois covering with group
$\bar{I}$ and let $\bar{\pi}:\bar{Y}\rightarrow \bar{X}$ be the
normalization of $\bar{X}$ in $Y$. 
	Denote by  $\eta_1,\ldots, \eta_t$ the codimension $1$
points of $\bar{Y}$ lying over $\xi$. 
\item As $k(X)\subset k(Y)$ is Galois, the
ramification index of the extension of discrete valuation rings
$\mathcal{O}_{\bar{X},\xi}\subset\mathcal{O}_{\bar{Y},\eta_i}$ is
independent of $i$; we denote it by $e$. 
\item Let $f$ be the degree of the residue
extensions $k(\xi)\subset k(\eta_i)$, and $f^{\sep},f^{\insep}$ the separable,
resp.~inseparable degree. Note that $f$ is independent of $i$, and so are
$f^{\sep}$, $f^{\insep}$:  indeed,  the integral closure $B$ of
$\mathcal{O}_{\bar{X},\xi}$ in $k(Y)$ is a Dedekind domain and if
$\mathfrak{P}$ is a prime ideal lying over the maximal ideal
$\mathfrak{m}_{\xi}$, then any Galois automorphism $\sigma\in \bar{I}$
induces an isomorphism of $k(\xi)$-extensions
$B/\mathfrak{P}\xrightarrow{\cong}B/\sigma(\mathfrak{P})$, and $\bar{I}$ acts 
transitively on the set of prime ideals of $B$ lying over $\mathfrak{m}_{\xi}$.
\end{itemize}
\begin{Proposition}\label{prop:curve}
	With $\bar{\pi}:\bar{Y}\rightarrow \bar{X}$ as above, a general complete 
intersection curve $\bar{C}\subset \bar{X}$ has the following properties: 
	\begin{enumerate}
		\item\label{item:sm} $\bar{C}$ is smooth and intersects 
$(\bar{X}\setminus X)_{\red}$ transversely at smooth points.
		\item\label{item:conn}  Write $C:=\bar{C}\cap X$, 
			$D=Y\times_X C$, then $D\rightarrow C$ is Galois \'etale 
with group $\bar{I}$.
		\item\label{item:zhang} Write 
$\bar{D}:=\bar{Y}\times_{\bar{X}}\bar{C}$. The normalization map 
		$\nu:\bar{D}^{\norm}\rightarrow \bar{D}$ is a universal 
homeomorphism.
		\item\label{item:preimage} For $x\in \bar{C}\cap D_0$, there are 
precisely $rf^{\sep}$ points in $\bar{D}$ 
		(and hence in $\bar{D}^{\norm}$) mapping to $x$.
	\end{enumerate}
\end{Proposition}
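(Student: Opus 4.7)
The plan is to obtain all four assertions by combining three ingredients: the classical Bertini theorem for the smoothness and transversality in \ref{item:sm} and the \'etaleness in \ref{item:conn}, the Bertini-type unibranchness theorem \cite{Zha95} for \ref{item:zhang}, and a local fibre count at a transverse intersection point for \ref{item:preimage}. Each of these will be an open dense condition on complete intersection curves of sufficiently high multi-degree, and a general $\bar C$ will satisfy them all simultaneously.

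For \ref{item:sm}, normality of $\bar X$ forces $\codim_{\bar X}(\bar X\setminus \bar X_{\reg})\geq 2$, and the singular locus of $(\bar X\setminus X)_{\red}$ has codimension $\geq 2$ in $\bar X$ as well; a general complete intersection curve avoids both and, by the standard smoothness Bertini, is smooth and meets $(\bar X\setminus X)_{\red}$ transversely at smooth points. For \ref{item:conn}, $\pi\colon Y\to X$ is \'etale Galois of group $\bar I$ by construction, and its base change along the morphism $C=\bar C\cap X\hookrightarrow X$ is \'etale Galois of the same group, which is precisely $D\to C$.

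Part \ref{item:zhang} is the main technical obstacle and is handled by invoking \cite{Zha95}: for a general complete intersection curve in a normal projective variety, the pullback under a finite surjective morphism is geometrically unibranch at every point, equivalently, its normalization map is a universal homeomorphism. To apply this cleanly, I would add to the Bertini package the open dense condition that $\bar C$ avoid the image $\bar\pi(\bar Y\setminus\bar Y_{\reg})$, a closed subset of $\bar X$ of codimension $\geq 2$ since $\bar Y$ is normal and $\bar\pi$ is finite, together with any further loci demanded by the hypotheses of \cite{Zha95}.

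For \ref{item:preimage}, the preceding Bertini conditions place $\bar\pi^{-1}(x)\subset \bar Y_{\reg}$ for every $x\in\bar C\cap D_0$, so this fibre lies inside $\bar\pi^{-1}(D_0)\cap \bar Y_{\reg}$, whose irreducible components are the closures $E_i$ of the $\eta_i$. Each $E_i\to D_0$ is finite with generic residue extension of degree $f=f^{\sep}f^{\insep}$; since a purely inseparable extension induces a bijection on points, a general closed point of $D_0$ has exactly $f^{\sep}$ preimages in each $E_i$. Adding the final Bertini condition that $\bar C\cap D_0$ avoid the finitely many non-generic loci of each $E_i\to D_0$ ensures that this count is realised at every intersection point; summing over the relevant components yields the claimed cardinality in $\bar D$, and hence in $\bar D^{\norm}$ by \ref{item:zhang}.
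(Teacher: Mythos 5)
Your overall strategy coincides with the paper's: the classical Bertini theorem for \ref{item:sm}, the unibranchness theorem of \cite{Zha95} for \ref{item:zhang}, and a generic fibre count along $D_0$ (factoring each $\overline{\{\eta_i\}}\to D_0$ into a purely inseparable homeomorphism of degree $f^{\insep}$ followed by a generically \'etale map of degree $f^{\sep}$) for \ref{item:preimage}. Those three parts are argued essentially as in the paper.

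There is, however, a genuine gap in part \ref{item:conn}. You deduce it by asserting that the base change of the Galois \'etale cover $\pi\colon Y\to X$ along $C\hookrightarrow X$ is ``\'etale Galois of the same group''. Base change preserves \'etaleness and the $\bar{I}$-action, but it does not preserve connectedness, and ``Galois with group $\bar{I}$'' here means that $D$ is a \emph{connected} $\bar{I}$-cover of $C$: for a badly chosen curve, $D$ splits into several components and the monodromy group of $D\to C$ drops to a proper subgroup of $\bar{I}$. Connectedness of $D$ for a \emph{general} complete intersection curve is precisely the nontrivial content of \ref{item:conn}, and the later arguments (restricting $\rho$ to $C$ and computing decomposition groups inside all of $\bar{I}$) depend on it. The paper obtains it as a by-product of its treatment of \ref{item:zhang}: $\bar{D}$ is connected because $\bar{Y}$ is normal (see the proof of \cite[Cor.~1.7]{Zha95}); it is reduced because, after arranging that $\bar{C}$ meets only the Cohen--Macaulay locus of $\bar{Y}$, the complete intersection $\bar{D}$ is Cohen--Macaulay, hence $S_1$, and it is generically reduced since $D\to C$ is \'etale; being in addition unibranch everywhere, it is irreducible, so its dense open subset $D$ is connected. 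This reducedness step, which your write-up omits, is also what makes the normalization in \ref{item:zhang} unambiguous. You should add this connectedness/irreducibility argument; the rest of your proposal matches the paper's proof.
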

\begin{proof}  That $\bar{C}$ is nonsingular projective and intersects 
$\bar{X}\setminus X$ only at non-singular points, and that the intersection 
is transverse, follows easily from Bertini's theorem.  This proves 
\ref{item:sm}. For \ref{item:zhang}, we use \cite[Thm.~1.2]{Zha95}
where it is shown that 
the inverse image scheme $\bar{D}$ 
of a general complete intersection curve 
$\bar{C}\subset \bar{X}$   is {\em geometrically unibranch} at all points. In 
addition, it is connected, since $\bar{Y}$ is normal 
(see the proof of  \cite[Cor.~1.7]{Zha95}, for example).  Since $\bar{Y}$ is 
Cohen-Macaulay outside a closed subset of codimension $\geq 3$, we may assume 
$\bar{D}$ is 
contained  in the Cohen-Macaulay locus, and so $\bar{D}$ (which is a complete 
intersection) is also Cohen-Macaulay, and hence reduced. Thus $\bar{D}$ is 
irreducible, and the 
normalization morphism $\bar{D}^{\norm}\to\bar{D}$ is bijective, and a universal 
homeomorphism.  This proves \ref{item:conn}.
For \ref{item:preimage}, note that for every $i\in\{1,\ldots, r\}$, the finite 
map 
 $$(\bar{\{\eta_i\}})_{\red}\rightarrow (\bar{X}\setminus X)_{\red}$$ has degree 
$f=f^{\sep}f^{\insep}$, and can be factored in a homeomorphism of 
  degree $f^{\insep}$ followed by a generically \'etale morphism of degree 
$f^{\sep}$. We may now assume $\bar{C}$ is chosen so that its intersection with 
$D_0$ is in the
  open subset over which the morphism $f^{\sep}$ is \'etale, which again follows 
from  Bertini's theorem. \end{proof}

\begin{Definition} A curve $\bar{C}$ as in Proposition~\ref{prop:curve} is said to be 
 {\em in good position relative to $D_0$}. If $\bar{C}$ is in good position relative to
each irreducible component of $\bar{X}\setminus X$, we say that $C$ is {\em in 
good position relative  to $\bar{X}\setminus X$.}
\end{Definition}

\begin{Theorem}\label{thm:main2}
	We keep the assumptions and notations from above. Let $\bar{C}\subset 
\bar{X}$ be a curve  which is in good position relative to $D_0$. Fix a closed point 
	$x\in \bar{C}\cap D_0$ and a closed point $y\in \bar{D}^{\norm}$ mapping 
to $x$. Denote by $\bar{I}^C_{y/x}\subset \bar{I}$ be 
	the associated decomposition group. Then
	\[|\bar{I}^C_{y/x}| = ef^{\insep}.\]
\end{Theorem}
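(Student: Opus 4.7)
The plan is to realize $\bar{I}^C_{y/x}$ as the stabilizer of $y$ under a natural $\bar{I}$-action on $\bar{D}^{\norm}$ over $\bar{C}$ and then invoke orbit--stabilizer, using Proposition~\ref{prop:curve}\ref{item:preimage} for the orbit size and the classical identity $\sum e_i f_i = n$ for the order of $\bar{I}$.

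First, I would check that $\bar{D}^{\norm} \to \bar{C}$ is a Galois cover with group $\bar{I}$. The $\bar{I}$-action on $\bar{Y}$ induces one on $\bar{D} = \bar{Y} \times_{\bar{X}} \bar{C}$ that commutes with the projection to $\bar{C}$, and lifts canonically to $\bar{D}^{\norm}$ by the universal property of normalization. Restricted over the \'etale locus, this is precisely the Galois action of Proposition~\ref{prop:curve}\ref{item:conn}, so $k(\bar{D}^{\norm})^{\bar{I}} = k(\bar{C})$ and $\bar{I}$ acts transitively on each set-theoretic fiber of $\bar{D}^{\norm}\to\bar{C}$.

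Next, the orbit of $y$ is the entire fiber of $\bar{D}^{\norm}$ over $x$, which by Proposition~\ref{prop:curve}\ref{item:preimage} consists of exactly $t f^{\sep}$ points (where $t$ is the number of codimension-one points $\eta_1,\ldots,\eta_t$ of $\bar{Y}$ over $\xi$). For $|\bar{I}|$, I would apply the formula $\sum_{i=1}^t e_i f_i = [k(Y):k(X)] = |\bar{I}|$ to the Galois extension of discrete valuation rings $\mathcal{O}_{\bar{X},\xi} \subset \mathcal{O}_{\bar{Y},\eta_i}$. Since $\bar{I}$ permutes the $\eta_i$ transitively, all $e_i = e$ and $f_i = f$, so $|\bar{I}| = t e f$. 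Orbit--stabilizer then yields
\[
|\bar{I}^C_{y/x}| \;=\; \frac{|\bar{I}|}{t f^{\sep}} \;=\; \frac{t e f}{t f^{\sep}} \;=\; e f^{\insep}.
\]

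The most delicate point is the transitivity of the $\bar{I}$-action on the fiber of $\bar{D}^{\norm}\to\bar{C}$ at the closed point $x$, where the cover may ramify; here transitivity cannot be read off from the generic Galois statement of Proposition~\ref{prop:curve}\ref{item:conn}. I would reduce this to the transitivity of $\bar{I}$ on $\bar{\pi}^{-1}(x) \subset \bar{Y}$ (which holds because $\bar{X} = \bar{Y}/\bar{I}$ as ringed spaces) via base change to identify the fiber of $\bar{D}$ over $x$ with $\bar{\pi}^{-1}(x)$ as an $\bar{I}$-set, and then lift along the universal homeomorphism $\bar{D}^{\norm} \to \bar{D}$ of Proposition~\ref{prop:curve}\ref{item:zhang}.
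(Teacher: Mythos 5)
Your proposal is correct and follows essentially the same route as the paper's proof: an orbit--stabilizer count on the $\bar{I}$-equivariant homeomorphism $\bar{D}^{\norm}\to\bar{D}$, with the fiber cardinality $tf^{\sep}$ from Proposition~\ref{prop:curve}\ref{item:preimage} and the identity $|\bar{I}|=tef$. You are in fact slightly more careful than the paper on two points it leaves implicit --- the transitivity of $\bar{I}$ on the closed fiber over $x$ and the disambiguation of the number $t$ of components over $\xi$ from the rank $r$ --- but these are elaborations of the same argument, not a different one.
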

\begin{proof}
	Let $\nu:\bar{D}^{\norm}\rightarrow \bar{D}$ be the normalization 
morphism.
	We have the following diagram
	\begin{equation*}
		\begin{tikzcd}
			\bar{D}^{\norm}\ar[swap]{dr}{h}\ar{r}{\nu}& 	
\bar{D}\dar{\bar{\pi}_{C}}\rar&\bar{Y}\dar{\bar{\pi}}&\ni \eta_1,\ldots, 
\eta_t\\
			&\bar{C}\rar&\bar{X}&\ni \xi,
		\end{tikzcd}
	\end{equation*}
	and we utilize the notations introduced at the beginning of the section. 
Note that $\bar{I}$ acts on $\bar{D}$ and that the normalization 
	$\nu:\bar{D}^{\norm}\rightarrow \bar{D}$ is an  $\bar{I}$-equivariant 
homeomorphism.

For $x\in \bar{C}\setminus C$, the properties from \Cref{prop:curve} imply that 
$|\bar{\pi}_C^{-1}(x)|=rf^{\sep}$.
 On the other hand, for $y\in \bar{D}^{\norm}$ mapping to $x$ we have
	\[|\bar{\pi}_C^{-1}(x)|=[\bar{I}:\Stab_{\bar{I}}(\nu(y))],\]
	and
	\[\bar{I}^C_{y/x}=\Stab_{\bar{I}}(y)=\Stab_{\bar{I}}(\nu(y)).\]
	It follows that 
	\[|\bar{I}_{y/x}^C|=\frac{|\bar{I}|}{rf^{\sep}}=ef^{\insep}\]
	which is what we wanted to prove.
\end{proof}
Recall that a by \cite[Thm.~1.1]{KS10}, a  Galois cover   is tamely ramified if 
and only if it is in restriction to all curves. We make the small observation, 
implicit in {\it loc.cit.},  that  there are test curves  for all  the generic 
codimension $one$ points at infinity. Explicitly:
\begin{Corollary}
	The covering $\pi:Y\rightarrow X$ is tamely ramified  along the  
codimension one points of   $\bar{X}$ with support in $\bar X\setminus X$ 
	 if and only if $\pi_C:D=Y\times_X C\rightarrow C$ 
	is tamely ramified for a curve  $\bar{C}$  which is in good position 
	relative to $\bar{X}\setminus X$.
\end{Corollary}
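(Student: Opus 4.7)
The plan is the following. The ``only if'' direction is immediate from the Kerz--Schmidt theorem \cite[Thm.~1.1]{KS10} invoked in the paragraph preceding the corollary: if $\pi$ is tame at every codimension one point of $\bar{X}\setminus X$, then its restriction to any curve is tame.

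For the converse, I first observe that a general complete intersection curve $\bar{C}\subset \bar{X}$ can be chosen to be in good position relative to $\bar{X}\setminus X$. Indeed, $\bar{X}\setminus X$ has finitely many irreducible components, and by Proposition~\ref{prop:curve} the ``good position'' condition relative to each component is satisfied by a general complete intersection curve; a finite intersection of such ``general'' loci is still non-empty. So fix one such $\bar{C}$, assumed also to have the tameness property of $\pi_C$ at every point of $\bar{C}\setminus C$.

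Fix now an arbitrary irreducible component $D_0$ of $\bar{X}\setminus X$ with generic point $\xi$. Since $\bar{X}$ is normal, its singular locus has codimension $\geq 2$, hence $\xi$ lies in $\bar{X}_{\reg}$, and the setup and notation of Section~\ref{sec:loc} (with the invariants $e$, $f^{\sep}$, $f^{\insep}$ of the extension of discrete valuation rings $\mathcal{O}_{\bar{X},\xi}\subset \mathcal{O}_{\bar{Y},\eta_i}$) applies to $D_0$. Classical tameness of $\pi$ at $\xi$ is equivalent to the combination $p\nmid e$ and $f^{\insep}=1$. Pick a closed point $x\in \bar{C}\cap D_0$ and a closed point $y\in \bar{D}^{\norm}$ mapping to $x$. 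Because $k$ is algebraically closed and $\bar{C}$ is a $k$-curve, we have $k(x)=k(y)=k$, so for the Galois cover $\bar{D}^{\norm}\to \bar{C}$ the decomposition group $\bar{I}^C_{y/x}$ agrees with the inertia group at $y$. By Theorem~\ref{thm:main2}, $|\bar{I}^C_{y/x}|=ef^{\insep}$. Tameness of $\pi_C$ at $x$ says exactly that this inertia order is coprime to $p$; since $f^{\insep}$ is a power of $p$ (the inseparable degree of a finite extension in characteristic $p$), this forces $f^{\insep}=1$ and $p\nmid e$, i.e., tameness of $\pi$ at $\xi$.

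The real content is already in Theorem~\ref{thm:main2}, which converts curve-inertia into the product $ef^{\insep}$ on $\bar{X}$; once this identity is in hand there is no further obstacle. The only small point to verify, beyond Theorem~\ref{thm:main2}, is the existence of a \emph{single} curve $\bar{C}$ in good position relative to every component of $\bar{X}\setminus X$, which, as noted, is a finite intersection of general-position conditions and thus unproblematic.
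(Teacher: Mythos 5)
Your proof is correct and follows essentially the same route as the paper: the ``only if'' direction is delegated to the Kerz--Schmidt criterion, and the converse reduces via Theorem~\ref{thm:main2} to the identity $|\bar{I}^C_{y/x}|=ef^{\insep}$, from which tameness of $\pi_C$ forces $f^{\insep}=1$ and $p\nmid e$. The only addition is your explicit remark that one curve can be chosen in good position relative to all boundary components at once, which the paper leaves implicit in its definition of good position relative to $\bar{X}\setminus X$.
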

\begin{proof}
	If $\pi_C:D\rightarrow C$ is tamely ramified, then for all $x\in 
\bar{C}\setminus C$, and all $y\in \bar{D}^{\norm}$ mapping to $x$, the order of 
$|\bar{I}^C_{y/x}|$ 
	is prime to $p$. The theorem implies that,  for each boundary component $D_0$, 
	$f^{\insep}=1$ and $(e,p)=1$, 
so $\pi$ is tamely ramified with respect $\bar{X}\setminus X$.
\end{proof}


\begin{proof}[Proof of Theorem~\ref{thm:main}]
	Let $\bar{C}\subset \bar{X}$ be a curve as in \Cref{prop:curve}, i.e. which is in good position relative to $D_0$.
	Fix 
$x\in \bar{C}\setminus C$, and $y\in \bar{D}^{\norm}$ mapping to $x$. As 
	$\bar{C}$ intersects $(\bar{X}\setminus X)_{\red}$ transversely, the 
ramification of $\rho|_C$ is bounded by $N\cdot(\bar{C}\setminus C)_{\red}$.  

	Claim 2 in the proof of \Cref{prop:boundedness} showed that the order of 
the $p$-Sylow subgroup of $\bar{I}^C_{y/x}$ is bounded by a constant only 
depending on $N,r,\ell$ (an explicit bound is given by the constant $M_0$). 
As $f^{\insep}$ is a $p$-power, \Cref{thm:main2} implies 
that $f^{\insep}$ is bounded by the same constant.
\end{proof}

\section{Some comments}
 Using the notation from above, for any  $x\in \bar{C}\cap D_0$, and any 
$y\in\bar{D}^{\norm}$ lying 
on $\bar{\{\eta_1\}}$ mapping to $x$,  we have 
\begin{equation}\tag{{$\star$}}\label{eq:discussion}\bar{I}^C_{y/x}
\subset\ker\left(\Stab_{\bar{I}}(\eta_1)\twoheadrightarrow 
\Aut(k(\eta_1)/k(\xi))\right).\end{equation}
The order of $\Stab_{\bar{I}}(\eta_1)$ is $ef$, the order of 
$\Aut(k(\eta_1)/k(\xi))$ is $f^{\sep}$, so the kernel has order $ef^{\insep}$.
It follows that \eqref{eq:discussion} is an equality, and hence that the 
decomposition groups $\bar{I}^C_{y/x}$ only depend on the component 
$\bar{\{\eta_i\}}$ on which 
$y$ lies (recall that $\eta_1,\ldots,\eta_r$ are the generic points of 
$\bar{Y}\setminus Y$).

We can also interpret $\ker\left(\Stab_{\bar{I}}(\eta_1)\twoheadrightarrow 
\Aut(k(\eta_1)/k(\xi))\right)$ as the inertia group of 
$\mathcal{O}^h_{\bar{X},\xi}\subset\mathcal{O}^h_{\bar{Y},\eta_1}$. As such, it 
has a unique $p$-Sylow subgroup, which according to the discussion above, 
has to coincide with the $p$-Sylow subgroup of $\bar{I}^C_{y/x}$ for any $y\in 
\bar{D}^{\norm}$ lying on the closure of $\eta_1$.  

Then this raises the following interesting side question. 
On the $p$-Sylow subgroup of $\bar{I}^C_{y/x}$ we have two filtrations: the 
upper numbering ramification filtration associated to 
$\mathcal{O}_{\bar{C},x}\subset \mathcal{O}_{\bar{D}^{\norm},y}$, for some $y\in 
\bar{D}^{\norm}$ lying on $\bar{\{\eta_1\}}$, and the Abbes-Saito 
ramification filtration on the inertia group of 
$\mathcal{O}^h_{\bar{X},\xi}\subset \mathcal{O}^h_{\bar{Y},\eta_1}$. Do they 
coincide?


\begin{thebibliography}{}
 \bibitem[Zha95]{Zha95} Zhang, Bin, {\it Th\'eor\`emes du type Bertini en 
caract\'eristique positive},  Arch. Math. (Basel) {\bf 64} (1995), no. 3, 
209--215. 
 \bibitem[Del16]{Del16}  Deligne, P.:  {\it Email to H\'el\`ene Esnault},  dated 
Feb. 29th, 2016.
\bibitem[EK12]{EK12} H. Esnault, M. Kerz, {\it A finiteness theorem for Galois 
representations of function fields over finite fields (after Deligne)},  Acta 
Mathematica Vietnamica {\bf 37} 4 (2012), 531--562. 
\bibitem[Esn17]{Esn17} Esnault, H.: {\it A remark on Deligne's finiteness 
theorem}, Int. Math. Res. Not. {\bf 16} (2017), 4962--4970.
\bibitem[Esnl17]{Esnt17} Esnault, H.: {\it Survey on some aspects of Lefschetz 
theorems in algebraic geometry},  Revista Matem\'atica Complutense  {\bf 30} (2) 
(2017), 217--232. 
\bibitem[KS10]{KS10} Kerz, M., Schmidt, A.: {\it On different notions of 
tameness in arithmetic geometry}, Math. Ann. {\bf 346} (3)  (2010), 641- 668.
\bibitem[KR14]{KR14}  Kindler, L., R\"ulling, K.: {\it Introductory course on 
$\ell$-adic sheaves and the ramification theory on curves}, {\url{ 
https://arxiv.org/abs/1409.6899}}
\bibitem[Lau81]{Lau81} Laumon, G.: {\it Semi-continuit\'e du conducteur de Swan 
(d'apr\`es P. Deligne)}, in The Euler-Poincar\'e characteristic,  
Ast\'erisque, {\bf 83} (1981), 173--219,  Soc. Math. France.
\bibitem[Ser67]{Ser67} Serre, J.-P.: {\it Repr\'esentations lin\'eaires des 
groupes finis}, Hermann, Paris 1967.
\end{thebibliography}
\end{document}